\documentclass[12pt,a4paper]{article}
\setlength{\oddsidemargin}{0in}
\setlength{\evensidemargin}{0in}
\setlength{\topmargin}{0.4in}
\setlength{\headsep}{0in}
\setlength{\textwidth}{6.5in}
\setlength{\textheight}{8.5in}

\usepackage{amsmath,amssymb,amsfonts}
\usepackage{theorem}
\usepackage[dvips]{graphicx}
\newtheorem{thm}{Theorem}

\newtheorem{lm}{Lemma}
\newtheorem{rem}{Remark}
\newenvironment{proof}[1][{\it Proof.}]
 {\begin{trivlist}\item[\hskip\labelsep{\bfseries #1}]}{\end{trivlist}}
\newcommand\QED
 {\hfill\hbox{\vrule width 4pt height 6pt depth 1.5pt}\par\bigskip}
\usepackage{array}
\def\thline{\noalign{\hrule height 1pt}}

\begin{document}

\noindent 
DISTRIBUTIONS OF THE LARGEST SINGULAR VALUES OF SKEW-SYMMETRIC RANDOM
 MATRICES AND THEIR APPLICATIONS TO PAIRED COMPARISONS
\vskip 3mm

\vskip 5mm
\noindent Satoshi Kuriki

\noindent The Institute of Statistical Mathematics %

\noindent 10-3 Midoricho, Tachikawa, Tokyo 190-8562, Japan

\noindent  kuriki@ism.ac.jp

\vskip 3mm
\noindent Key Words: 
Bradley-Terry model;
Complex Wishart matrix;
Three-way deadlock;
Tube method.
\vskip 3mm

\noindent ABSTRACT

Let $A$ be a real skew-symmetric Gaussian random matrix
whose upper triangular elements are independently distributed according to
the standard normal distribution.
We provide the distribution of the largest singular value
 $\sigma_1$ of $A$.
Moreover, by acknowledging the fact 
that the largest singular value can be
regarded as the maximum of a Gaussian field, we deduce the distribution of
the standardized largest singular value
$\sigma_1/\sqrt{\mathrm{tr}(A'A)/2}$.
These distributional results are utilized in Scheff\'{e}'s
paired comparisons model.  We propose tests
for the hypothesis of subtractivity based on the 
largest singular value of the skew-symmetric residual matrix.
Professional baseball league data are analyzed
 as an illustrative example. %
\vskip 4mm

\noindent 1. INTRODUCTION

Let $A=(a_{ij})$ be a $p\times p$ real skew-symmetric Gaussian matrix
whose upper triangular elements $a_{ij}$ $(1\le i<j\le p)$ are independently
distributed according to the standard normal distribution.
The density function of $A$ is given by
\begin{align}
 \frac{1}{(2\pi)^{p(p-1)/4}} \exp\Bigl\{ -\frac{1}{4}
 \mathrm{tr}(A'A) \Bigr\} \, dA, \quad dA=\prod_{i<j} da_{ij}.
\label{DENSITY}
\end{align}
The singular value decomposition of $A$ is given by
\begin{align}
\label{SVD0}
 A = \sum_{i=1}^t \sigma_i \bigl(u_{2i-1}u_{2i}'-u_{2i}u_{2i-1}'\bigr),
\end{align}
where $\sigma_1\geq\cdots\geq\sigma_t\geq 0$, $t=[p/2]$
 (the integer part of $p/2$), are the nonnegative singular values, and
$u_i$ is the $i$th column vector of a $p\times p$ orthogonal matrix $U$.
In this paper, we derive the distributions of $\sigma_1$
and its standardized version
$\sigma_1/\sqrt{\sum_{i=1}^t \sigma_i^2}$,
which are the largest singular values
 of the skew-symmetric matrices $A$ and $A/\sqrt{\mathrm{tr}(A'A)/2}$, respectively.

These distributional results are utilized in the analysis of
 paired comparisons.  Suppose that there are $m$ objects %
 (treatments, stimuli, {\it etc.\/}) $O_1,\ldots,O_m$ and
 that paired comparisons are made for all 
${m \choose 2}$ pairs.
For each $i<j$, $y_{ij}$ is assumed to be the observed
degree of preference of $O_i$ over $O_j$.
The observation $(y_{ij})$ is written as an $m\times m$ skew-symmetric
matrix by letting $y_{ji}=-y_{ij}$ and $y_{ii}=0$.
For such data, Scheff\'{e} (1952) proposed an analysis of variance based on the
following linear model:
\begin{align}
 y_{ij}   = \mu_{ij}+\varepsilon_{ij}, \qquad
 \mu_{ij} = (\alpha_i-\alpha_j)+\gamma_{ij} \qquad
 (1\leq i,j\leq m),
\label{MODEL}
\end{align}
where $\varepsilon_{ij}$ $(i<j)$ are independently distributed according to
the normal distribution $N(0,\sigma^2)$
with the mean $0$ and the variance $\sigma^2$.
The parameters $\alpha_i$ and $\gamma_{ij}$
are called the main effect and the interaction, respectively.
When the no interaction hypothesis $H_0 : \gamma_{ij}\equiv 0$ is true,
the model is easily interpreted since the relative preference
$\mu_{ij}$ is the difference of the scores $\alpha_i$ and $\alpha_j$.
For this reason, $H_0$ is called the hypothesis of subtractivity.
In this paper, we propose the use of the largest singular value
of the interaction estimator matrix $\bigl(\widehat\gamma_{ij}\bigr)$
as the test statistics for testing $H_0$ in the following two cases:
(i) the error variance $\sigma^2$ is known or an independent estimator
 $\widehat\sigma^2$ is available, and
(ii) $\sigma^2$ is unknown and no independent $\widehat\sigma^2$
is available.
The proposed tests are max-type statistics which suggest the direction of
discrepancy with $H_0$ when it is rejected.

This paper is arranged as follows.
In Section 2, the joint distribution of
singular values $(\sigma_1,\ldots,\sigma_t)$ of 
$A$, and the marginal distribution of 
$\sigma_1$ are given.
Moreover, acknowledging the fact 
that $\sigma_1$ can be regarded as
the maximum of a Gaussian random field on a manifold, the distribution of
 $\sigma_1/\sqrt{\sum_{i=1}^t \sigma_i^2}$ is derived by modifying that of
 $\sigma_1$ using the tube method (Kuriki and Takemura (2001),
 Takemura and Kuriki (2002), Adler and Taylor (2007)).
In Section 3,
test statistics for $H_0$
based on the largest singular values are proposed.
Furthermore, professional baseball league data are analyzed
as an illustrative example.

\vskip 4mm

\noindent 2. DISTRIBUTION OF THE LARGEST SINGULAR VALUES

\vskip 1mm

\noindent 2.1. Singular value decomposition

The set of $p\times p$ real skew-symmetric matrices is denoted by $Skew(p)$.
Let $A=(a_{ij})\in Skew(p)$ be a Gaussian random matrix 
whose upper triangular elements $a_{ij}$ $(i<j)$ are
independently distributed according to the standard normal distribution.
The density of $A$ is given in (\ref{DENSITY}).
The singular value decomposition (\ref{SVD0}) is rewritten as
\begin{alignat}{2}
 A=U D_\sigma U', \qquad
 D_\sigma
 &=\mathrm{diag}(\sigma_1 J,\ldots,\sigma_t J) & \quad & \mbox{if $p=2t$},
 \nonumber \\
 &= \mathrm{diag}(\sigma_1 J,\ldots,\sigma_t J,0) & \quad & \mbox{if $p=2t+1$},
\label{SVD}
\end{alignat}
where $\sigma=(\sigma_1,\ldots,\sigma_t)$,
 $\sigma_1\geq\cdots\geq\sigma_t\geq 0$, $t=[p/2]$,
is a vector of nonnegative singular values,
$J = \begin{pmatrix} 0 & 1 \\ -1 & 0 \end{pmatrix}$,
and $U\in O(p)$, the set of $p\times p$ orthogonal matrices.

In (\ref{SVD}), $U$ is not determined uniquely
 as an element of $O(p)$ since $A=(UH) D_\sigma (UH)'$
holds for any $H\in H(p)$, where
\begin{alignat*}{2}
 H(p)
  &= \{\mathrm{diag}({H}_1,\ldots,{H}_t) \mid {H}_i\in SO(2)\}
        & \quad & \mbox{if $p=2t$}, \nonumber \\
  &= \{\mathrm{diag}({H}_1,\ldots,{H}_t,e) \mid {H}_i\in SO(2),\, e=\pm 1 \}
        & \quad & \mbox{if $p=2t+1$}, 
\end{alignat*}
is a subgroup of $O(p)$.  Here, $SO(2)$ denotes the set of $2\times 2$
orthogonal matrices with the determinant $1$.
Conversely, $U$ in (\ref{SVD}) is defined uniquely as an element of
the left quotient space $U(p)=\{ U H(p) \mid U\in O(p) \}$
 $(=O(p)/H(p), \mbox{say})$ when all the singular values $\sigma_1,\ldots,\sigma_t$ are
distinct and positive, which is the case with probability $1$ in our application.
By introducing a quotient topology,
$U(p)$ becomes a manifold of the dimension $p(p-1)/2-t$.

The Jacobian of the transformation of (\ref{SVD}) was
given by Lemma 2 of Khatri (1965) as
\begin{align}
 dA = \prod_{i=1}^t \sigma_i^{2\epsilon}
            \prod_{i<j} \bigl(\sigma_i^2-\sigma_j^2\bigr)^2 \, d\sigma \, dU,
\label{JACOBIAN}
\end{align}
where
$\epsilon = p - 2t$ $(=0 \mbox{ for $p$ even},\ =1 \mbox{ for $p$ odd})$,
$dA=\prod_{i<j} da_{ij}$, $d\sigma=\prod_{i=1}^t d\sigma_i$, and
\begin{align}
 dU = \bigwedge_{(i,j)\in I_p} u_j' du_i
\label{WEDGE}
\end{align}
with $U=(u_1,\ldots,u_p)$, %
$
 I_p = \{ (i,j) \mid 1\leq i<j\leq p\} \setminus
 \{ (2h-1,2h) \mid 1\leq h\leq t\}
$.

The differential form $dU$ in (\ref{WEDGE}) is well-defined on $U(p)$
because $dU$ is independent of the choice of $\{{u}_i\}$, that is, 
\begin{align}
  \bigwedge_{(i,j)\in I_p} u_j' du_i
 =\bigwedge_{(i,j)\in I_p} \widetilde{u}_j' d\widetilde{u}_i
\label{CHOICE}
\end{align}
holds for
$ (\widetilde{u}_1,\ldots,\widetilde{u}_p)
 =(u_1,\ldots,u_p)H$, $H\in H(p)$.
(The proof of (\ref{CHOICE}) is similar to (2)
 in Section 4.6 of James (1954), and is omitted.)
Moreover, $dU$ is invariant with respect to the
orthogonal transformation $U\mapsto Q U$, $Q \in O(p)$.
(The proof is similar to (3) in Section 4.6 of
James (1954), and is omitted.)
The volume $\mathrm{Vol}(U(p))=\int_{U(p)}dU$ of the manifold $U(p)$
is needed to determine the normalizing constant of the density
 of $\sigma$.  This is calculated as follows.
\begin{lm}
\label{lm:vol-U}
\begin{alignat*}{2}
 \mathrm{Vol}(U(p))
=  \frac{2^t \pi^{p(p-1)/4}}
  {\prod_{i=1}^t \Gamma(p/2-i+1)\Gamma(p/2-i+1/2)}
 &= \frac{2^t \pi^{p(p-1)/4}}
  {\prod_{i=1}^p\Gamma(i/2)} & \qquad & \mbox{for $p$ even}, \\
 &= \frac{2^t \pi^{p(p-1)/4}}
  {\prod_{i=2}^p\Gamma(i/2)} & \qquad & \mbox{for $p$ odd},
\end{alignat*}
where $t=[p/2]$.
\end{lm}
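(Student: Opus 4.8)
The plan is to realise $U(p)=O(p)/H(p)$ as the base of the orbit map $\pi\colon O(p)\to U(p)$, $U\mapsto UH(p)$, a principal bundle with fibre $H(p)$, and to obtain $\mathrm{Vol}(U(p))$ by dividing the total mass of $dU=\bigwedge_{i<j}u_j'\,du_i$ over $O(p)$ by the volume of the fibre. For the first ingredient I would use the classical column-by-column parametrisation: writing $U=(u_1,\ldots,u_p)$, the vector $u_k$ ranges over the unit sphere $S^{p-k}$ in the orthogonal complement of $u_1,\ldots,u_{k-1}$, and $dU$ factors along this construction, so that
\begin{align*}
 \int_{O(p)}dU=\prod_{k=1}^{p}\mathrm{Vol}(S^{p-k})=\prod_{j=1}^{p}\frac{2\pi^{j/2}}{\Gamma(j/2)}=\frac{2^{p}\,\pi^{p(p+1)/4}}{\prod_{i=1}^{p}\Gamma(i/2)},
\end{align*}
which is the classical value computed by James (1954).

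Next I would split the index set $\{(i,j):i<j\}=I_p\cup\{(2h-1,2h):1\le h\le t\}$ and factor $dU=\omega_U\wedge\omega_H$ with $\omega_U=\bigwedge_{(i,j)\in I_p}u_j'\,du_i$ and $\omega_H=\bigwedge_{h=1}^{t}u_{2h}'\,du_{2h-1}$. By (\ref{CHOICE}) and the $O(p)$-invariance recorded just after (\ref{WEDGE}), $\omega_U$ is the pull-back under $\pi$ of the invariant form $dU$ on $U(p)$. On a single fibre $UH(p)\cong H(p)$, the form $\omega_H$ is the (translated) Haar volume form: parametrising the $h$th $SO(2)$ block by its rotation angle $\theta_h$ gives $u_{2h}'\,du_{2h-1}=d\theta_h$, hence $\omega_H$ integrates over $H(p)$ to $(2\pi)^{t}$ if $p=2t$ and to $2(2\pi)^{t}$ if $p=2t+1$, the extra factor $2$ being the discrete component $e=\pm1$. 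A Fubini argument for the bundle then yields $\int_{O(p)}dU=\mathrm{Vol}(U(p))\cdot\mathrm{Vol}(H(p))$, and dividing finishes the evaluation. The final closed forms follow from a short simplification: $2^{p-t}=2^{t}$ for $p=2t$ and $2^{p-t-1}=2^{t}$ for $p=2t+1$; $p(p+1)/4-t$ equals $p(p-1)/4$ for even $p$ and $p(p-1)/4+\tfrac12$ for odd $p$; and $\Gamma(1/2)=\sqrt\pi$ absorbs the leftover $\sqrt\pi$ in the odd case, turning $\prod_{i=1}^{p}\Gamma(i/2)$ into $\prod_{i=2}^{p}\Gamma(i/2)$. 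Rewriting the denominator in the form $\prod_{i=1}^{t}\Gamma(p/2-i+1)\Gamma(p/2-i+1/2)$ is the Legendre-type pairing of the gamma arguments $\{1/2,1,\tfrac32,2,\ldots,p/2\}$.

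The step I expect to be the main obstacle is the clean justification that $dU$ globally equals $\omega_U\wedge\omega_H$ and that $\omega_H$ restricts to the Haar form on each fibre without a cross term, i.e. the coarea/principal-bundle identity $\mathrm{Vol}(O(p))=\mathrm{Vol}(U(p))\,\mathrm{Vol}(H(p))$ for these particular invariant forms; this is essentially already implicit in the well-definedness of $dU$ on $U(p)$ quoted from Section 4.6 of James (1954), but it is the only point that needs care, the rest being bookkeeping. As an alternative route one could bypass the fibration altogether: substitute the Jacobian (\ref{JACOBIAN}) into the density (\ref{DENSITY}), use $\mathrm{tr}(A'A)=2\sum_{i=1}^{t}\sigma_i^2$, and read off $\mathrm{Vol}(U(p))$ from the normalisation $\int(\text{density})=1$, at the price of evaluating a Selberg-type integral over the singular values.
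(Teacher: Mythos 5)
Your argument is correct, and it reaches the stated formula by a genuinely different decomposition from the paper's. The paper sets up the recurrence $\mathrm{Vol}(U(p))=\mathrm{Vol}\bigl(\widetilde G(2,p)\bigr)\,\mathrm{Vol}(U(p-2))$, i.e.\ it fibres $U(p)$ over the oriented Grassmann manifold $\widetilde G(2,p)$ with fibre $U(p-2)$, peels off two columns at a time, and closes the recursion with $\mathrm{Vol}(U(2))=2$, $\mathrm{Vol}(U(1))=1$ and James's value $\mathrm{Vol}\bigl(\widetilde G(2,p)\bigr)=2\Omega_p\Omega_{p-1}/(\Omega_2\Omega_1)$. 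You instead use the single fibration $O(p)\to O(p)/H(p)=U(p)$ and divide $\mathrm{Vol}(O(p))=\prod_{j=1}^p\Omega_j$ by $\mathrm{Vol}(H(p))=(2\pi)^t$ (times $2$ for $p$ odd); your bookkeeping of the powers of $2$ and $\pi$ and the $\Gamma(1/2)=\sqrt\pi$ absorption in the odd case all check out, and your fibre computation $u_{2h}'\,du_{2h-1}=d\theta_h$ is right. The two routes rest on the same foundational fact — that the wedge of the independent entries of the skew-symmetric matrix of one-forms $U'dU$ factors along a fibration into a base form times a fibre form, which both you and the paper delegate to Section 4.6 / Theorem 5.1 of James (1954) — so neither is more rigorous than the other; yours is more direct (one Fubini step instead of $t$ recursive ones), while the paper's recurrence has the side benefit of exhibiting $\mathrm{Vol}\bigl(\widetilde G(2,p)\bigr)$ explicitly, an object it reuses when identifying $M\cong\widetilde G(2,p-2)$ in Appendix A.1. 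Your closing remark that one could instead read off $\mathrm{Vol}(U(p))$ from the normalization of the singular-value density is essentially what Appendix A.3 does in reverse via the determinant identity (\ref{det}), so it is a legitimate cross-check rather than an independent proof.
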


\begin{proof}
Similar to the proof of Theorem 5.1 of James (1954)
obtaining the volume of the Stiefel manifold,
we can prove the recurrence relation
$$
 \mathrm{Vol}(U(p))=\mathrm{Vol}\bigl(\widetilde G(2,p)\bigr)\,\mathrm{Vol}(U(p-2))
$$
with $\mathrm{Vol}(U(2))=2$, $\mathrm{Vol}(U(1))=1$,
and 
$$ \mathrm{Vol}\bigl(\widetilde G(2,p)\bigr) = 
 \int_{\widetilde G(2,p)}\bigwedge_{i=1}^2\bigwedge_{j\geq 3} u_j' du_i
$$
is the volume of the oriented Grassmann manifold $\widetilde G(2,p)$
 (Appendix A.1).
The results follow from the fact that
$$ \mathrm{Vol}\bigl(\widetilde G(2,p)\bigr) 
 = 2 \frac{\Omega_p \Omega_{p-1}}{\Omega_2 \Omega_1}
$$
with $\Omega_n=2\pi^{n/2}/\Gamma(n/2)$ ((5.23) of James (1954)).
\QED
\end{proof}

\vskip 4mm

\noindent 2.2. Distributions of the largest singular value

Substituting (\ref{SVD}) and (\ref{JACOBIAN}) into (\ref{DENSITY}), and
integrating (\ref{DENSITY}) with respect to $U$ over $U(p)$,
we have the joint density of $\sigma_1>\cdots>\sigma_t>0$ as
\begin{align}
 c_p \exp\Bigl\{ -\frac{1}{2}\sum_i \sigma_i^2 \Bigr\}
 \prod_i \sigma_i^{2\epsilon}
 \prod_{i<j}\bigl(\sigma_i^2-\sigma_j^2\bigr)^2,
\label{LDENSITY}
\end{align}
where $c_p=\mathrm{Vol}(U(p))/(2\pi)^{p(p-1)/4}$ is the normalizing constant.
The integration of the joint density (\ref{LDENSITY}) over
$x>\sigma_1>\cdots>\sigma_t>0$ yields
the cumulative distribution function of the largest singular value
 $P\bigl(\sigma_1 < x \bigr)$.
Since the linkage factor in (\ref{LDENSITY}) is written as
the Vandermonde determinant
$ \prod_{i<j}\bigl(\sigma_i^2-\sigma_j^2\bigr)
 = \det\bigl(\sigma_j^{2(t-i)}\bigr)_{1\le i,j\le t} $, 
we have
\begin{align*}
P\bigl(\sigma_1 < x \bigr)
&= c_p \int_{x>\sigma_1>\cdots>\sigma_t>0}
 \det\biggl(\sum_{k=1}^t \sigma_k^{2(t-i)} \sigma_k^{2(t-j)}\biggr)\,
 \prod_{k=1}^t \sigma_k^{2\epsilon} \, e^{-\sigma_k^2/2} \, d\sigma_k \\
&= c_p \det\biggl(\int_0^x \sigma^{2(t-i)+2(t-j)+2\epsilon} \,
 e^{-\sigma^2/2} \, d\sigma \biggr)_{1\le i,j\le t}.
\end{align*}
The last equality in the expression above follows from
the determinental Binet-Cauchy formula
 ((2.1) of Krishnaiah (1976), (2.12) of Karlin and Rinott (1988)).
Making a change of variable, we obtain the following theorem.
\begin{thm}
\label{thm:largest-sv}
The distribution function of the largest singular value $\sigma_1$ is given by
\begin{align}
\label{largest-sv}
P\bigl(\sigma_1 < x \bigr)
= d_p \det\biggl(
    \int_0^{x^2} \phi^{p-i-j-1/2} e^{-\phi/2} \, d\phi
  \biggr)_{1\leq i,j\leq t},
\end{align}
where $t=[p/2]$, 
\begin{align*}
d_p = \frac{c_p}{2^t}
 &= \frac{1}{2^{p(p-1)/4}\,\prod_{i=1}^p\Gamma(i/2)}\quad\mbox{for $p$ even},\\
 &= \frac{1}{2^{p(p-1)/4}\,\prod_{i=2}^p\Gamma(i/2)}\quad\mbox{for $p$ odd}.
\end{align*}
\end{thm}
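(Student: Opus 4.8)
The plan is to begin from the expression for $P\bigl(\sigma_1<x\bigr)$ already obtained in the text immediately preceding the theorem, namely
\begin{align*}
 P\bigl(\sigma_1<x\bigr)
 = c_p \det\biggl(\int_0^x \sigma^{2(t-i)+2(t-j)+2\epsilon}\,e^{-\sigma^2/2}\,d\sigma\biggr)_{1\le i,j\le t},
\end{align*}
so that only a change of variable and an identification of the constant remain to be carried out. (Recall this expression itself arises by substituting the decomposition (\ref{SVD}) and its Jacobian (\ref{JACOBIAN}) into (\ref{DENSITY}), using that $\mathrm{tr}(A'A)=2\sum_i\sigma_i^2$ is invariant under $U$ so that the integration over $U(p)$ simply contributes the factor $\mathrm{Vol}(U(p))$ of Lemma \ref{lm:vol-U}, writing the squared linkage factor as $\det\bigl(\sigma_j^{2(t-i)}\bigr)^2$, and applying the determinantal Binet--Cauchy formula to the symmetric integrand over the ordered region $x>\sigma_1>\cdots>\sigma_t>0$.)

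First I would substitute $\phi=\sigma^2$ in each entry of the determinant, so that $d\sigma=\tfrac12\phi^{-1/2}\,d\phi$, the upper limit $x$ becomes $x^2$, and the power of $\phi$ in the $(i,j)$ entry becomes $(t-i)+(t-j)+\epsilon-\tfrac12$. The crucial bookkeeping is the identity $2t+\epsilon=p$ (immediate from $\epsilon=p-2t$), which turns this power into $p-i-j-\tfrac12$, so the $(i,j)$ entry equals $\tfrac12\int_0^{x^2}\phi^{\,p-i-j-1/2}e^{-\phi/2}\,d\phi$. Next I would pull the common factor $\tfrac12$ out of each of the $t$ rows by multilinearity of the determinant — noting that it contributes once per row, not once per entry — which produces an overall factor $2^{-t}$ and yields (\ref{largest-sv}) with $d_p=c_p/2^t$.

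It then remains only to evaluate $d_p$ explicitly. Since $c_p=\mathrm{Vol}(U(p))/(2\pi)^{p(p-1)/4}$, inserting the value of $\mathrm{Vol}(U(p))$ from Lemma \ref{lm:vol-U} makes the powers of $\pi$ cancel and the factor $2^t$ cancel against the $2^{-t}$, leaving $d_p=1/\bigl(2^{p(p-1)/4}\prod_{i=1}^p\Gamma(i/2)\bigr)$ for $p$ even and $d_p=1/\bigl(2^{p(p-1)/4}\prod_{i=2}^p\Gamma(i/2)\bigr)$ for $p$ odd, exactly as stated. I do not expect any genuine obstacle here; the only places demanding care are the exponent arithmetic under $\phi=\sigma^2$ together with the Jacobian factor $\phi^{-1/2}$ (in particular the identity $2t+\epsilon=p$), and tracking the factor $2^{-t}$ so that the normalizing constant comes out in the claimed closed form.
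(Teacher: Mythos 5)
Your proposal is correct and follows exactly the paper's route: the paper derives the Binet--Cauchy determinant expression in the text preceding the theorem and then simply says ``making a change of variable, we obtain the following theorem,'' and your substitution $\phi=\sigma^2$ with the exponent bookkeeping $2t+\epsilon=p$, the per-row factor $2^{-t}$, and the evaluation of $d_p$ via Lemma \ref{lm:vol-U} supplies precisely the omitted details. No gaps.
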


\begin{rem}
\label{rem:complex-wishart}
Let $\phi_1\ge\cdots\ge\phi_t\ge 0$ be the eigenvalues of a $t\times t$
 central complex Wishart Hermitian matrix
 $CW_t(t+\epsilon-1/2,I_t)$ with $\epsilon=p-2t$.
Then, it is observed that the joint density of $\sigma_i^2/2$ $(1\leq i\leq t)$ %
coincides with that of $\phi_i$ $(1\leq i\leq t)$.
(See (102) of James (1964).)
Accordingly, the marginal distribution of the largest eigenvalue of the complex
Wishart matrix obtained by Khatri (1964) is consistent with
Theorem \ref{thm:largest-sv}.
\end{rem}

\vskip 4mm

\noindent 2.3. Upper probability of the standardized largest singular value

Assume that the linear space of $p\times p$ real skew-symmetric matrices
$Skew(p)$ is endowed with the metric
$\langle A,B\rangle = \mathrm{tr}(A'B)/2 = \sum_{i<j} a_{ij} b_{ij}$,
$A=(a_{ij}),\,B=(b_{ij})\in Skew(p)$.
Let
$$ V(2,p) = \bigl\{ (h_1,h_2) : p\times 2 \mid
 h_1'h_1 = h_2'h_2 = 1,\,h_1'h_2 = 0 \bigr\}
$$
be the set of $p\times 2$ orthogonal matrices, that is, a Stiefel manifold.
The largest singular value of a skew-symmetric matrix $A$ is written as
\begin{align*}
\sigma_1
&= \max_{(h_1,h_2)\in V(2,p)} h_1' A h_2
= \max \, \bigl(h_1' A h_2 - h_2' A h_1\bigr)/2 \\
&= \max \, \mathrm{tr} \bigl\{ \bigl(h_1 h_2' - h_2 h_1'\bigr)'A \bigr\}/2
= \max_{H\in M} \, \langle H,A\rangle,
\end{align*}
where
\begin{align}
\label{M}
 M = M(p) = \bigl\{ h_1 h_2' - h_2 h_1' \mid (h_1,h_2)\in V(2,p) \bigr\}
  \subset Skew(p).
\end{align}
Let $n=\dim Skew(p)=p(p-1)/2$. 
It is evident that $M$ is a subset of the unit sphere
$ \{ A \in Skew(p) \mid \langle A,A\rangle=1 \} $
of $Skew(p)$ with the dimension $n-1$.
Moreover, as shown in Appendix A.1,
$M$ is diffeomorphic to an oriented Grassmann manifold
$\widetilde G(2,p-2)=V(2,p)/SO(2)$ with the dimension 
$$ d = \dim M = \dim \widetilde G(2,p-2) = \dim V(2,p) - \dim SO(2)
 = 2(p-2).
$$

The density (\ref{DENSITY}) of $A$ is rewritten as
$$ \frac{1}{(2\pi)^{n/2}}
  \exp\Bigl\{ -\frac{1}{2} \langle A,A\rangle \Bigr\} \, dA, $$
where $n=\dim Skew(p)$ and
 $dA=\prod_{i<j} da_{ij}$ is the volume element of $Skew(p)$ at $A$
induced by the metric $\langle\cdot,\cdot\rangle$.
This means that the distribution of $A$ is the standard
multivariate normal distribution in $Skew(p)$.
According to the general theory of the tube method
 (Kuriki and Takemura (2001), Takemura and Kuriki (2002)),
there exist coefficients $w_{d+1}, w_{d-1}, \ldots$,
referred to as Weyl's geometric invariants, such that
\begin{align}
 P\bigl(\sigma_1 > x \bigr)
&= P\Bigl( \max_{H\in M} \, \langle H,A\rangle > x \Bigr)
 \nonumber \\
&= \sum_{k=0}^{[d/2]} w_{d+1-2k} \overline G_{d+1-2k}(x^2)
 + o\bigl(e^{-x^2/2}\bigr), \quad x\to\infty,
\label{tail}
\end{align}
and
\begin{align}
 P\Biggl( \frac{\sigma_1}{\sqrt{\sum_{i=1}^t \sigma_i^2}} > x \Biggr)
&= P\biggl( \frac{\max_{H\in M} \, \langle H,A\rangle}
  {\sqrt{\langle A,A\rangle}} > x \biggr)
 \nonumber \\
&= \sum_{k=0}^{[d/2]} w_{d+1-2k}
  \overline B_{(d+1-2k)/2,(n-d-1+2k)/2} (x^2),
 \quad x\ge \cos\theta_c
\label{tube}
\end{align}
hold, where $\theta_c>0$ is a geometric quantity of $M$ called
 the critical radius,
$\overline G_{\nu}(\cdot)$ 
is the upper probability of the chi-square distribution
with $\nu$ degrees of freedom, and
$\overline B_{a,b}(\cdot)$ 
is the upper probability of the beta distribution with the parameter $(a,b)$.

Let
$ C = \bigl(2^{p-i-j+1/2}\,\Gamma(p-i-j+1/2)\bigr)_{1\le i,j\le t}$, %
$t=[p/2]$.
From Theorem \ref{thm:largest-sv}, we have
\begin{align*}
P\bigl(\sigma_1 > x \bigr)
&= 1-\det(C)^{-1} \det\biggl(\int^{x^2}_0
  \phi^{p-i-j-1/2} e^{-\phi/2} \, d\phi \biggr)_{1\le i,j\le t} \\
&= 1-\det(C^{-1}) \det\biggl(C - \biggl( \int_{x^2}^\infty
  \phi^{p-i-j-1/2} e^{-\phi/2} \, d\phi \biggr)_{1\le i,j\le t} \biggr) \\
&= 1-\det\biggl(I - C^{-1} \biggl( \int_{x^2}^\infty
  \phi^{p-i-j-1/2} e^{-\phi/2} \, d\phi \biggr)_{1\le i,j\le t} \biggr).
\end{align*}
Noting that
$$ \int_{x^2}^\infty \phi^{\nu/2-1} e^{-\phi/2} \, d\phi
 = 2^{\nu/2}\,\Gamma(\nu/2)\,\overline G_\nu(x^2)
 = O\bigl(x^{\nu-2} e^{-x^2/2}\bigr), \quad x\to\infty, $$
we have
\begin{align}
P(\sigma_1 >x)
&= \mathrm{tr}\biggl(C^{-1} \biggl( \int_{x^2}^\infty
  \phi^{p-i-j-1/2} e^{-\phi/2} \, d\phi \biggr)_{1\le i,j\le t} \biggr)
   + o\bigl(e^{-x^2/2}\bigr) \nonumber \\
&= \sum_{i,j=1}^t g^{ij} g_{ij}\,\overline G_{2p-2i-2j+1}(x^2)
   + o\bigl(e^{-x^2/2}\bigr),
\label{tail2}
\end{align}
where
$ g_{ij} = \Gamma(p-i-j+1/2) $
and $g^{ij}$ is the $(i,j)$th element of the inverse matrix of
$(g_{ij})_{1\le i,j\le t}$.
Comparing (\ref{tail2}) and (\ref{tail}), we obtain the theorem below.
\begin{thm}
\label{thm:tube}
When $p\ge 4$, the upper probability of
the standardized largest singular value is given by
\begin{align}
\label{tube2}
P\Biggl(\frac{\sigma_1}{\sqrt{\sum_{i=1}^t \sigma_i^2}} > x \Biggr)
= \sum_{i,j=1}^t g^{ij} g_{ij}\,
  \overline B_{(2p-2i-2j+1)/2,(n-2p+2i+2j-1)/2}(x^2), \quad x\ge 1/\sqrt{2},
\end{align}
with $n=p(p-1)/2$.
\end{thm}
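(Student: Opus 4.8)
The plan is to extract Weyl's geometric invariants $w_{d+1-2k}$ of (\ref{tail}) and (\ref{tube}) from the asymptotic expansion of $P(\sigma_1>x)$ already derived from Theorem~\ref{thm:largest-sv}, and then substitute them into the exact tube identity~(\ref{tube}). The only geometric input needed is that the critical radius of $M$ satisfies $\theta_c\ge\pi/4$, i.e.\ $\cos\theta_c\le1/\sqrt2$; I expect this to be the main point, everything else being bookkeeping.

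First I would show $\theta_c\ge\pi/4$. For a point $P$ in the unit sphere of $Skew(p)$, the geodesic distance to $M$ is $\arccos\bigl(\max_{H\in M}\langle H,P\rangle\bigr)=\arccos\tau_1(P)$, where $\tau_1(P)\ge\tau_2(P)\ge\cdots\ge 0$ are the singular values of the skew-symmetric matrix $P$ in the sense of~(\ref{SVD}), so that $\sum_i\tau_i(P)^2=\langle P,P\rangle=1$. If $P$ lies within distance $\pi/4$ of $M$, then $\tau_1(P)>1/\sqrt2$, hence $\tau_1(P)^2>\tfrac12\ge\sum_{i\ge2}\tau_i(P)^2$ and therefore $\tau_1(P)>\tau_2(P)$. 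Consequently the maximising $H\in M$ --- the unit element $v_1v_2'-v_2v_1'$ built from the leading singular pair $(v_1,v_2)$ of $P$ --- is unique and depends smoothly on $P$ throughout the radius-$\pi/4$ tube; the normal endpoint map of $M$ is thus injective and an immersion there, so $\theta_c\ge\pi/4$. (The bound is sharp: any $P$ with $\tau_1(P)=\tau_2(P)=1/\sqrt2$ and $\tau_i(P)=0$ for $i\ge3$ sits at distance exactly $\pi/4$ with a whole circle of nearest points on $M$, which is why $1/\sqrt2$ is precisely the threshold in~(\ref{tube2}).)

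Next I would promote~(\ref{tail2}) to an estimate with an explicit remainder. Writing $N(x)=\bigl(\int_{x^2}^\infty\phi^{p-i-j-1/2}e^{-\phi/2}\,d\phi\bigr)_{1\le i,j\le t}$, so that $P(\sigma_1>x)=1-\det\bigl(I-C^{-1}N(x)\bigr)$, each entry of $N(x)$ is bounded by a polynomial in $x$ times $e^{-x^2/2}$, so in the expansion $1-\det(I-C^{-1}N(x))=\mathrm{tr}\bigl(C^{-1}N(x)\bigr)+\cdots$ every omitted term is bounded by a polynomial in $x$ times $e^{-x^2}$. Hence $P(\sigma_1>x)=\sum_{i,j=1}^t g^{ij}g_{ij}\,\overline G_{2p-2i-2j+1}(x^2)$ up to such a remainder. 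On the other side, the remainder in the tube-method expansion~(\ref{tail}) is governed by the contribution of the sphere outside the tube and is of order $\overline G_n\bigl(x^2/\cos^2\theta_c\bigr)$, which, since $\cos^2\theta_c\le\tfrac12$, is again a polynomial in $x$ times $e^{-x^2}$. Both remainders are thus $o\bigl(x^{-1}e^{-x^2/2}\bigr)$, smaller than every term $\overline G_\nu(x^2)\sim c_\nu x^{\nu-2}e^{-x^2/2}$ that occurs, and since the functions $\{\overline G_\nu(x^2)\}_{\nu\ge1}$ are asymptotically linearly independent the two finite sums must agree term by term.

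Finally I would read off the invariants and substitute. Since $d=2(p-2)$, one has $d+1-2k=2p-2i-2j+1$ exactly when $i+j=k+2$, so matching the expansions gives $w_{d+1-2k}=\sum_{i+j=k+2}g^{ij}g_{ij}$ for $k=0,\dots,[d/2]$; in particular $w_1=0$ when $p$ is odd, since no admissible pair then satisfies $i+j=[d/2]+2=p$. Inserting these coefficients into~(\ref{tube}) and re-indexing the sum by $(i,j)$ through $k=i+j-2$ --- which sends $(d+1-2k)/2\mapsto(2p-2i-2j+1)/2$ and $(n-d-1+2k)/2\mapsto(n-2p+2i+2j-1)/2$ --- yields precisely the right-hand side of~(\ref{tube2}), with the range $x\ge\cos\theta_c=1/\sqrt2$ inherited from the validity range of~(\ref{tube}). (The hypothesis $p\ge4$ is needed because for $p\le3$ there is a single singular value and the standardized statistic is identically $1$.)
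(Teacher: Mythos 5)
Your proposal is correct, and its overall architecture is the same as the paper's: derive the exact tail expansion of $P(\sigma_1>x)$ from Theorem~\ref{thm:largest-sv}, match it term by term against the tube expansion (\ref{tail}) to identify $w_{d+1-2k}=\sum_{i+j=k+2}g^{ij}g_{ij}$, and substitute into (\ref{tube}). Two points differ in substance. First, the critical radius: the paper computes $\cot^2\theta_c$ exactly in Appendix~A.2 by an explicit local calculation with the projection $P_\phi$ and the $2\times 2$ matrix $R=\widetilde H'H$, arriving at $\theta_c=\pi/4$; you instead give a global ``reach'' argument --- a unit-norm $P$ within geodesic distance $<\pi/4$ of $M$ has $\tau_1(P)>1/\sqrt2$, hence $\tau_1^2>1-\tau_1^2\ge\tau_2^2$, hence a unique nearest point on $M$. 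This is shorter, only yields the inequality $\theta_c\ge\pi/4$ (which is all the theorem needs), and has the bonus of exhibiting the degenerate configurations $\tau_1=\tau_2=1/\sqrt2$ that show $1/\sqrt2$ is sharp; the paper's computation buys the exact value and stays entirely within the Kuriki--Takemura $\sup$-formula definition of $\theta_c$, whereas your route implicitly invokes the equivalence of that definition with the unique-nearest-point (no self-overlap) characterization, and your parenthetical ``and an immersion there'' is not separately argued --- it is subsumed by that equivalence rather than proved. Second, you sharpen the remainder bookkeeping: the paper simply asserts that the expansion is uniquely represented, while you observe that both remainders are actually $O(\mathrm{poly}(x)\,e^{-x^2})$ (the determinant cross-terms on one side, the outside-the-tube contribution controlled by $\cos^2\theta_c=1/2$ on the other), which is genuinely needed to separate the slowest term $\overline G_1(x^2)\sim c\,x^{-1}e^{-x^2/2}$ from a bare $o(e^{-x^2/2})$ error. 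The index matching $d+1-2k=2p-2i-2j+1\iff i+j=k+2$ and the resulting beta parameters agree with the paper.
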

\begin{proof}
Weyl's geometric invariants in (\ref{tail}) are given by
$ w_{d+1-2k} = w_{2p-3-2k}
 = \sum_{i+j=k+2} g^{ij} g_{ij} $.
Since this asymptotic expansion is uniquely represented, we have (\ref{tube2}) 
from (\ref{tube}).
The critical radius $\theta_c$ is proved to be $\pi/4$ in
 Appendix A.2.
\QED
\end{proof}

\begin{lm}
\label{lm:hankel}
The $(i,j)$th element of the inverse matrix of
 $(g_{ij})_{1\le i,j\le t}$ with $g_{ij}=\Gamma(p-i-j+1/2)$, $t=[p/2]$,
 is explicitly given as follows.
\begin{align}
g^{ij}
&= \frac{(-1)^{i+j}}
 {\Gamma(t+1-i)\,\Gamma(t+\epsilon+1/2-i)\,\Gamma(t+1-j)\,
  \Gamma(t+\epsilon+1/2-j)}
 \nonumber \\
&\qquad\times \sum_{k=1}^{\min(i,j)}
 \frac{\Gamma(t+1-k)\,\Gamma(t+\epsilon+1/2-k)}{\Gamma(i+1-k)\,\Gamma(j+1-k)},
 \quad \epsilon = p-2t.
\label{hankel}
\end{align}
\end{lm}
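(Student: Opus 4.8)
The plan is to recognise $(g_{ij})$ as (a reversal of) a Hankel moment matrix and to invert it through the associated orthogonal polynomials. Write $p=2t+\epsilon$ and set $a=t-i$, $b=t-j$, so that $p-i-j=\epsilon+a+b$ and hence $g_{ij}=\Gamma(\epsilon+a+b+1/2)=\mu_{a+b}$, where $\mu_r=\int_0^\infty x^r\,d\mu(x)$ is the $r$th moment of the measure $d\mu(x)=x^{\epsilon-1/2}e^{-x}\,dx$ on $(0,\infty)$. Since $\epsilon\in\{0,1\}$, this is a genuine positive measure with infinite support, so its Hankel matrix $H=(\mu_{a+b})_{0\le a,b\le t-1}$ is positive definite. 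Because $(g_{ij})_{1\le i,j\le t}=\Pi H\Pi'$ for the permutation matrix $\Pi$ implementing the relabelling $i\mapsto t-i$, we obtain $g^{ij}=(H^{-1})_{t-i,\,t-j}$, and it suffices to compute $H^{-1}$.

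Next I would invoke the standard expression for the inverse of a Hankel moment matrix. Let $p_0,\ldots,p_{t-1}$ be the orthonormal polynomials of $\mu$, $p_k(x)=\sum_{a=0}^k\kappa_{k,a}x^a$, and let $K=(\kappa_{k,a})$ be the resulting lower‑triangular matrix. Orthonormality reads $KHK'=I$, whence $H^{-1}=K'K$, i.e. $(H^{-1})_{ab}=\sum_{k=\max(a,b)}^{t-1}\kappa_{k,a}\kappa_{k,b}$. Now $d\mu$ is precisely the weight of the generalised Laguerre polynomials $L_k^{(\alpha)}$ with $\alpha=\epsilon-1/2$; using
$$ L_k^{(\alpha)}(x)=\sum_{a=0}^k(-1)^a\binom{k+\alpha}{k-a}\frac{x^a}{a!},\qquad \int_0^\infty L_k^{(\alpha)}(x)^2\,x^\alpha e^{-x}\,dx=\frac{\Gamma(k+\alpha+1)}{k!}, $$
one gets $\kappa_{k,a}=\sqrt{k!/\Gamma(k+\alpha+1)}\,(-1)^a\binom{k+\alpha}{k-a}/a!$. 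Substituting $\binom{k+\alpha}{k-a}=\Gamma(k+\alpha+1)/(\Gamma(k-a+1)\Gamma(a+\alpha+1))$ and collecting factors (with $\alpha+1=\epsilon+1/2$) yields
\begin{align*}
(H^{-1})_{ab}
&= \frac{(-1)^{a+b}}{\Gamma(a+1)\Gamma(b+1)\Gamma(a+\epsilon+1/2)\Gamma(b+\epsilon+1/2)} \\
&\qquad\times \sum_{k=\max(a,b)}^{t-1}\frac{\Gamma(k+1)\,\Gamma(k+\epsilon+1/2)}{\Gamma(k-a+1)\,\Gamma(k-b+1)}.
\end{align*}

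Finally I would translate back. Setting $a=t-i$, $b=t-j$ turns $(-1)^{a+b}$ into $(-1)^{i+j}$ and the four prefactor gammas into $\Gamma(t+1-i),\Gamma(t+1-j),\Gamma(t+\epsilon+1/2-i),\Gamma(t+\epsilon+1/2-j)$; in the sum, the substitution $k=t-\ell$ makes $\ell$ run from $1$ to $\min(i,j)$ and sends the summand to $\Gamma(t+1-\ell)\Gamma(t+\epsilon+1/2-\ell)/(\Gamma(i+1-\ell)\Gamma(j+1-\ell))$, which is exactly (\ref{hankel}). I do not expect a genuine obstacle here: the only real content is identifying the correct orthogonal‑polynomial family together with its normalisation, after which everything is bookkeeping — chiefly the two relabellings $i\mapsto t-i$ and $k\mapsto t-k$. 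An alternative that avoids orthogonal polynomials is to verify directly that $\sum_{k=1}^t g_{ik}\,g^{kj}=\delta_{ij}$ with the claimed formula for $g^{kj}$; this collapses to a Chu--Vandermonde‑type summation, but the computation is heavier than the moment‑matrix route and offers less insight.
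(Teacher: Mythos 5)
Your proof is correct, and it takes a genuinely different route from the paper's. The paper (Appendix A.3) works with $G=\bigl(\Gamma(\delta+2t-i-j+1)\bigr)$ for general $\delta>-1$ and inverts it by explicit elementary row operations: a product of band matrices $B_k$ reduces $G$ to $ETD$ with $T$ a lower-triangular binomial matrix and $D,E$ diagonal, whence $G^{-1}=D^{-1}T^{-1}E^{-1}B$; specializing $\delta=\epsilon-1/2$ gives (\ref{hankel}), and the factorization also yields $\det(G)=\prod_i\Gamma(\delta+t-i+1)\,(t-i)!$ as a byproduct, which the author uses to reconfirm the normalizing constant $d_p$ in Theorem \ref{thm:largest-sv}. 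You instead recognize the reversed matrix as the Hankel moment matrix of the weight $x^{\epsilon-1/2}e^{-x}$ and invert it via $H^{-1}=K'K$ with $K$ the coefficient matrix of the orthonormal generalized Laguerre polynomials $L_k^{(\alpha)}$, $\alpha=\epsilon-1/2$; I checked the bookkeeping (the identification $\mu_{a+b}=\Gamma(a+b+\epsilon+1/2)$, the formula for $\kappa_{k,a}$, and the relabellings $a=t-i$, $k=t-\ell$) and it lands exactly on (\ref{hankel}). Your argument is arguably more conceptual — it exposes the Laguerre-ensemble structure already implicit in Remark \ref{rem:complex-wishart} on the complex Wishart connection, and explains \emph{why} the inverse has the rank-structured form $\sum_{k\le\min(i,j)}$ — at the cost of importing standard facts about $L_k^{(\alpha)}$ (explicit expansion and norm), whereas the paper's row-reduction is elementary and self-contained but offers less insight and must be verified by direct computation. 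Both are essentially triangular factorizations of $G$ (LU versus Cholesky-of-the-inverse), so it is no surprise they produce the same sum.
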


\smallskip
A sketch of the proof of Lemma \ref{lm:hankel} is provided
 in Appendix A.3.

Table I shows the upper probabilities at the critical point
 $P\bigl(\sigma_1/\sqrt{\sum \sigma_i^2} > 1/\sqrt{2} \bigr)$.
Note that when $p=4$ or $5$, $\sigma_1^2/(\sigma_1^2+\sigma_2^2)>1/2$ holds
with probability $1$.
This table shows that when $p$ is not so large, 
the formula given in Theorem \ref{thm:tube} provides a
wide range of values for the upper probabilities.

\begin{center}
Table I. Upper probabilities at the critical point $1/\sqrt{2}$.
\end{center}

\begin{center}
\begin{tabular}[h]{cc|cc|cc|cc}
\thline
$p$& prob. & $p$ & prob. & $p$ & prob.& $p$ & prob.\\
\hline
4  &  1.0000  &  8 &  0.9614 & 12 &  0.3236 & 16 & 0.0048 \\
5  &  1.0000  &  9 &  0.8827 & 13 &  0.1603 & 17 & 0.0009 \\
6  &  0.9989  & 10 &  0.7354 & 14 &  0.0634 & 18 & $<$0.0001 \\
7  &  0.9913  & 11 &  0.5328 & 15 &  0.0197 \\
\thline
\end{tabular}
\end{center}

\smallskip
\begin{rem}
According to the Gauss-Bonnet theorem
 (Lemma 3.5 of  Kuriki and Takemura (2001), %
 Corollary 3.1 of Takemura and Kuriki (2002)), 
the Euler-Poincar\'{e} characteristic of the
manifold $M$, or equivalently that of $\widetilde G(2,p-2)$, is given by
\begin{align*}
\chi(M) = \chi\bigl(\widetilde G(2,p-2)\bigr) &=  2 \sum_{k=1}^{[d/2]} w_{d+1-2k} 
  = 2 \sum_{i,j=1}^t g^{ij} g_{ij} = 2\,[p/2].
\end{align*}
\end{rem}

\vskip 4mm

\noindent 3. APPLICATIONS TO PAIRED COMPARISONS

\vskip 1mm

\noindent 3.1. Tests for the hypothesis of subtractivity

In the paired comparisons model (\ref{MODEL}),
we assume the side conditions
$\sum_i \alpha_i=0$ and $\sum_i \gamma_{ij}=\sum_j\gamma_{ij}=0$.
The least square estimators of $\alpha_i$ and $\gamma_{ij}$
are given by
\begin{align}
\label{ols}
 \widehat\alpha_i = \sum_{j=1}^m y_{ij}/m, \qquad%
 \widehat\gamma_{ij} = y_{ij} - \bigl(\widehat\alpha_i - \widehat\alpha_j\bigr),
\end{align}
respectively.
Scheff\'{e} (1952) showed that $\sum_{i<j} \widehat\gamma_{ij}^2/\sigma^2$
follows the chi-square distribution with
 $(m-1)(m-2)/2$ degrees of freedom when
the hypothesis of subtractivity $H_0 : \gamma_{ij}\equiv 0$ holds.
Based on this property, when $\sigma^2$ is known, or unknown but
there exists an independent estimator $\widehat\sigma^2$ of $\sigma^2$
made from the replication of the observations,
the chi-square or $F$ statistics for testing
$H_0$ can be constructed.

Let $\Gamma=(\gamma_{ij})$ and $\widehat\Gamma=\bigl(\widehat\gamma_{ij}\bigr)$.
The $i$th largest nonnegative singular value is denoted by
$\sigma_i(\cdot)$.
In this paper, instead of Scheff\'{e} (1952)'s ANOVA described above,
we propose test statistics
$\sigma_1\bigl(\widehat\Gamma\bigr)/\sigma$ when $\sigma^2$ is known,
or
$\sigma_1\bigl(\widehat\Gamma\bigr)/\widehat\sigma$ when $\sigma^2$ is unknown.
Because $\sigma_1\bigl(\widehat\Gamma\bigr)=0$
if and only if $\widehat\Gamma=0$, our proposed tests are consistent.
The following lemma about the distribution of the singular values
of $\widehat\Gamma$ can be easily proved.
The critical points of the proposed test
 are calculated by virtue of this lemma.
\begin{lm}
\label{lm:hatGamma}
Under the null hypothesis $H_0$,
the distribution followed by the nonzero singular values
$\sigma_i\bigl(\widehat\Gamma\bigr)$, $i=1,\ldots,[(m-1)/2]$,
is the same as that followed by the $(m-1)\times (m-1)$ skew-symmetric matrix $A$
whose density is (\ref{DENSITY}) with $p=m-1$.
\end{lm}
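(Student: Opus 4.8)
The plan is to show that, under $H_0$, the residual matrix $\widehat\Gamma$ is nothing but the error matrix compressed to the $(m-1)$-dimensional subspace $\mathbf{1}^\perp\subset\mathbb{R}^m$, and then to read off the law of that compression from the orthogonal invariance of the Gaussian distribution on $Skew(m)$. As a preliminary reduction I would note that replacing the skew-symmetric error matrix $\varepsilon=(\varepsilon_{ij})$ by $\varepsilon/\sigma$ divides every singular value by $\sigma$, so it suffices to treat the case $\sigma=1$, in which $\varepsilon$ itself is an $m\times m$ skew-symmetric Gaussian matrix with density (\ref{DENSITY}) for $p=m$.

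Next I would carry out the (routine) computation that, under $H_0$ and the side condition $\sum_i\alpha_i=0$, and using $y_{ii}=0$, one has $\widehat\alpha_i=\alpha_i+(\varepsilon\mathbf{1})_i/m$, hence $\widehat\gamma_{ij}=\varepsilon_{ij}-(\varepsilon\mathbf{1})_i/m+(\varepsilon\mathbf{1})_j/m$, where $(\varepsilon\mathbf{1})_i=\sum_j\varepsilon_{ij}$. Writing $P=I_m-\tfrac1m\mathbf{1}\mathbf{1}'$ for the orthogonal projection onto $\mathbf{1}^\perp$ and using $\mathbf{1}'\varepsilon\mathbf{1}=\sum_{i,j}\varepsilon_{ij}=0$ (skew-symmetry), the display above collapses to the clean identity $\widehat\Gamma=P\varepsilon P$. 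I would then factor $P=QQ'$ with $Q$ an $m\times(m-1)$ matrix whose columns form an orthonormal basis of $\mathbf{1}^\perp$, so that $Q'Q=I_{m-1}$ and $\widehat\Gamma=QBQ'$ with $B:=Q'\varepsilon Q\in Skew(m-1)$ (skew because $\varepsilon'=-\varepsilon$). Since $Q'Q=I_{m-1}$, we get $\widehat\Gamma'\widehat\Gamma=QB'BQ'$, whose nonzero eigenvalues coincide with those of $B'B$; therefore the nonzero singular values of $\widehat\Gamma$ are exactly the singular values of $B$, indexed $i=1,\ldots,[(m-1)/2]$ as in the statement.

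The remaining step, identifying the distribution of $B$, is where the real content lies. The conjugation action $A\mapsto O'AO$ of $O(m)$ on $Skew(m)$ is an isometry of the inner product $\langle A,C\rangle=\mathrm{tr}(A'C)/2$, hence leaves the standard Gaussian law (\ref{DENSITY}), $p=m$, invariant. Completing $Q$ to an orthogonal matrix $\widetilde Q=[\,Q,\ \mathbf{1}/\sqrt m\,]\in O(m)$, the matrix $\widetilde Q'\varepsilon\widetilde Q$ then has the same distribution as $\varepsilon$; its leading $(m-1)\times(m-1)$ principal submatrix is precisely $B$, and it involves only the upper-triangular entries of $\widetilde Q'\varepsilon\widetilde Q$ indexed by $1\le i<j\le m-1$, which are i.i.d.\ $N(0,1)$. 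Thus $B$ has density (\ref{DENSITY}) with $p=m-1$, which together with the previous paragraph proves the lemma. I expect the only non-mechanical point to be this invariance argument, together with the observation that a principal submatrix of a standard skew-symmetric Gaussian matrix is again one; the identity $\widehat\Gamma=P\varepsilon P$ and the singular-value bookkeeping are straightforward. (Alternatively, one can avoid completing $Q$ and argue directly that $B=\iota^*\varepsilon$, where $\iota:H\mapsto QHQ'$ isometrically embeds $Skew(m-1)$ into $Skew(m)$, and that the adjoint of an isometry carries a standard Gaussian to a standard Gaussian.)
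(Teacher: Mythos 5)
Your proof is correct. The paper itself offers no argument for this lemma (it merely asserts that it ``can be easily proved''), and your derivation---$\widehat\Gamma=P\varepsilon P$ with $P=I_m-\frac{1}{m}\mathbf{1}\mathbf{1}'$, the reduction to $B=Q'\varepsilon Q\in Skew(m-1)$ sharing the nonzero singular values of $\widehat\Gamma$, and the orthogonal-invariance argument showing that $B$ is a standard skew-symmetric Gaussian of size $m-1$---is exactly the natural way to fill that gap, with all the key steps (the identity $\mathbf{1}'\varepsilon\mathbf{1}=0$, the completion of $Q$ to an orthogonal matrix, and the principal-submatrix observation) checked correctly.
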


One advantage of our proposed test is that
the statistic $\sigma_1\bigl(\widehat\Gamma\bigr)$ provides us with some suggestions
about the direction of discrepancy with the null hypothesis $H_0$
when it is rejected.
Note that the largest singular value is
$\sigma_1\bigl(\widehat\Gamma\bigr) = \max_{c,d} c'\widehat\Gamma d$,
where the maximum is taken over $c=(c_1,\ldots,c_m)'$ and
$d=(d_1,\ldots,d_m)'$ such that
\begin{align}
\label{contrast}
 \sum_i c_i^2 =\sum_i d_i^2 =1,
 \qquad \sum_i c_i =\sum_i d_i =\sum_i c_i d_i =0.
\end{align}
When $H_0$ is rejected, we can examine the contrast functions
$c'\Gamma d$ such that $c'\widehat\Gamma d$ is large.
Although it is difficult to interpret the contrasts $c'\Gamma d$
in general, it must be noted that this class of contrasts
includes an interesting subclass of contrasts, as described below.
Let $c=(c_1,\ldots,c_m)'$ with $c_i=1/\sqrt{2}$, $c_j=-1/\sqrt{2}$,
 $c_l=0$ $(l\ne i,j)$, and let
$d=(d_1,\ldots,d_m)'$ with $d_i=d_j=1/\sqrt{6}$, $d_k=-2/\sqrt{6}$,
 $d_l=0$ $(l\ne i,j,k)$.  Then,
$$
 c'\Gamma d = (\gamma_{ij}+\gamma_{jk}+\gamma_{ki})/\sqrt{3}
 = (\mu_{ij}+\mu_{jk}+\mu_{ki})/\sqrt{3},
$$
which represents a departure
from the hypothesis of subtractivity among the three objects
$O_i$, $O_j$ and $O_k$.
This was introduced in the context of the Bradley-Terry model
and named as the three-way deadlock parameter by Hirotsu (1983).

So far, we have considered the case where $\sigma^2$ is known or there exists
an independent estimator $\widehat\sigma^2$.
When $\sigma^2$ is unknown and no estimator of $\sigma^2$ is available,
testing $H_0$ on the basis of model (\ref{DENSITY}) is no longer possible.
Instead, we assume a specific model for the interaction.
The following is an analogue to the test for interaction in the two-way layout
without replication proposed by Johnson and Graybill (1972).
\begin{thm}
\label{thm:lrt}
For the $m\times m$ paired comparisons data $(y_{ij})$, assume the model
\begin{align*}
 y_{ij}   &= \mu_{ij}+\varepsilon_{ij}, \nonumber \\
 \mu_{ij} &= (\alpha_i-\alpha_j) + \lambda (c_i d_j - d_i c_j),
 \quad\lambda\ge 0, \quad 1\leq i,j\leq m,
\end{align*}
where $c_i$ and $d_i$ satisfy (\ref{contrast}), and
$\varepsilon_{ij}$ $(i<j)$ are independently distributed according to
the normal distribution $N(0,\sigma^2)$ with $\sigma^2$ unknown.
Then, the likelihood ratio test statistic for testing the hypothesis $\lambda=0$ is
$$
\textstyle
 \sigma_1\bigl(\widehat\Gamma\bigr)\big/
  \sqrt{\sum_{i=1}^{[(m-1)/2]}\sigma_i^2\bigl(\widehat\Gamma\bigr)} =
 \sigma_1\bigl(\widehat\Gamma\bigr)\big/
  \sqrt{\mathrm{tr}\bigl(\widehat\Gamma'\widehat\Gamma\bigr)/2},
$$
that is, the standardized largest singular value of
 $\widehat\Gamma=\bigl(\widehat\gamma_{ij}\bigr)$,
 where $\widehat\gamma_{ij}$ is given in (\ref{ols}).   
\end{thm}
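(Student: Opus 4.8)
The plan is to compute the numerator and denominator of the likelihood ratio explicitly, exploiting an orthogonal decomposition of the space $Skew(m)$ of $m\times m$ skew-symmetric matrices, equipped with $\langle A,B\rangle=\mathrm{tr}(A'B)/2$. Write $Y=(y_{ij})\in Skew(m)$, $n=\dim Skew(m)=m(m-1)/2$, let $\mathbf 1$ denote the all-ones vector, and set $A_\alpha=\alpha\mathbf 1'-\mathbf 1\alpha'=(\alpha_i-\alpha_j)$ and $H=cd'-dc'$, so that the mean in the model is $A_\alpha+\lambda H$. The side conditions (\ref{contrast}) say precisely that $\{c,d\}$ is an orthonormal pair lying in $\mathbf 1^\perp$; a one-line computation then yields $\langle H,H\rangle=1$ and $\langle A_\alpha,H\rangle=0$ for every $\alpha$ (each cross term carries a factor $\mathbf 1'c$ or $\mathbf 1'd$, which vanishes). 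Hence the mean lies in the orthogonal sum $L_1\oplus L_2$, where $L_1=\{\alpha\mathbf 1'-\mathbf 1\alpha'\}$ has dimension $m-1$ under $\sum_i\alpha_i=0$ and $L_2=L_1^\perp$ is the interaction space; moreover the estimator $\widehat\Gamma$ in (\ref{ols}) is exactly the orthogonal projection of $Y$ onto $L_2$, and it satisfies $\widehat\Gamma\mathbf 1=0$.

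First I would profile out $\alpha$ and $\sigma^2$. The Gaussian log-likelihood is $-\tfrac n2\log(2\pi\sigma^2)-\tfrac1{2\sigma^2}\|Y-A_\alpha-\lambda H\|^2$, and the orthogonal splitting gives $\|Y-A_\alpha-\lambda H\|^2=\|\Pi_{L_1}Y-A_\alpha\|^2+\|\widehat\Gamma-\lambda H\|^2$. Since $\alpha\mapsto A_\alpha$ is onto $L_1$, the first term can be reduced to $0$ simultaneously for all $\lambda,c,d$. Hence the residual sum of squares is $\|\widehat\Gamma\|^2$ under $H_0:\lambda=0$ and $\min_{\lambda\ge0,\,c,d}\|\widehat\Gamma-\lambda H\|^2$ under the full model. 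Expanding $\|\widehat\Gamma-\lambda H\|^2=\|\widehat\Gamma\|^2-2\lambda\langle\widehat\Gamma,H\rangle+\lambda^2$, and using $\widehat\Gamma\mathbf 1=0$ (which forces the singular vectors of $\widehat\Gamma$ attached to its nonzero singular values to lie in $\mathbf 1^\perp$, so that the restriction $c,d\in\mathbf 1^\perp$ is not binding), the identity $\sigma_1(B)=\max_{H\in M}\langle H,B\rangle$ of Section~2.3, applied inside $\mathbf 1^\perp$, gives $\max_{c,d}\langle\widehat\Gamma,H\rangle=\sigma_1(\widehat\Gamma)\ge0$. Therefore the full-model residual sum of squares equals $\|\widehat\Gamma\|^2-\sigma_1(\widehat\Gamma)^2$, the one-sided restriction $\lambda\ge0$ being immaterial because $M=-M$ makes this maximum nonnegative.

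Next I would assemble the likelihood ratio. Maximizing over $\sigma^2$ replaces a residual sum of squares $S$ by the maximized log-likelihood $-\tfrac n2\log(2\pi S/n)-\tfrac n2$, so the likelihood ratio statistic is $\Lambda=(S_{\mathrm{full}}/S_0)^{n/2}=\bigl(1-\sigma_1(\widehat\Gamma)^2/\|\widehat\Gamma\|^2\bigr)^{n/2}$, where, by the singular value decomposition (\ref{SVD0}) of $\widehat\Gamma$, $\|\widehat\Gamma\|^2=\mathrm{tr}(\widehat\Gamma'\widehat\Gamma)/2=\sum_{i=1}^{[(m-1)/2]}\sigma_i^2(\widehat\Gamma)$. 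With probability one $\widehat\Gamma\ne0$, so the ratio lies in $[0,1)$ and $-2\log\Lambda=-n\log\bigl(1-\sigma_1(\widehat\Gamma)^2/\sum_i\sigma_i^2(\widehat\Gamma)\bigr)$ is a strictly increasing function of $\sigma_1(\widehat\Gamma)/\sqrt{\sum_i\sigma_i^2(\widehat\Gamma)}=\sigma_1(\widehat\Gamma)/\sqrt{\mathrm{tr}(\widehat\Gamma'\widehat\Gamma)/2}$. Since the likelihood ratio test is thus a monotone function of this quantity, the standardized largest singular value of $\widehat\Gamma$ is (an equivalent form of) the likelihood ratio test statistic, as claimed.

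The step demanding the most care is the reduction in the second paragraph: one must check both that $\alpha\mapsto A_\alpha$ is onto $L_1$ (so the $\alpha$-component of the residual can be annihilated uniformly in $(\lambda,c,d)$) and that imposing $c,d\perp\mathbf 1$ does not lower $\max_H\langle\widehat\Gamma,H\rangle$; the latter is exactly where $\widehat\Gamma\mathbf 1=0$ enters, letting us transplant the variational characterization of $\sigma_1$ from Section~2.3 into the subspace $\mathbf 1^\perp$. The remainder is the routine Johnson--Graybill-type bookkeeping of profile likelihoods.
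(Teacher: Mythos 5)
Your argument is correct and complete: the orthogonality of the main-effect space $L_1$ and the interaction direction $H=cd'-dc'$ under the side conditions, the identification of $\widehat\Gamma$ as $\Pi_{L_1^\perp}Y$ with $\widehat\Gamma\mathbf 1=0$ (so that restricting $c,d\perp\mathbf 1$ does not reduce $\max_H\langle\widehat\Gamma,H\rangle=\sigma_1(\widehat\Gamma)$), and the profile over $\lambda\ge0$ and $\sigma^2$ are all handled properly. The paper itself states Theorem 3 without proof, citing only the analogy with Johnson and Graybill (1972); your profile-likelihood computation is exactly the argument that analogy presupposes, so there is nothing to flag.
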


According to Lemma \ref{lm:hatGamma}, the upper probability
 ($p$-value) of the likelihood ratio statistic proposed in Theorem \ref{thm:lrt}
can be calculated by Theorem \ref{thm:tube} within the range
of Table I.

\vskip 4mm

\noindent 3.2. Analysis of professional baseball data

Table II gives the score sheet of the Central League,
 one of Japan's professional baseball leagues, in 1997.
For any pair of teams, the total number of games is $n=27$.
Let $r_{ij}$  $(1\leq i,j\leq m=6)$ be the number of games
where Team $i$ beats Team $j$,
as indicated in the $(i,j)$th cell of Table II.
There are no ties in this table, that is, $r_{ij}+r_{ji}=n$.

\begin{center}
Table II. Score sheet of the Central League in 1997.
\end{center}

\begin{center}
\begin{tabular}[h]{l|cccccc}
\thline
\ Team\ \,$i\ \backslash\ j$
    &  1&  2&  3&  4&  5&  6\\
\hline
1. Yakult    & -- & 13 & 15 & 19 & 20 & 16 \\
2. Yokohama  & 14 & -- & 16 & 13 & 10 & 19 \\
3. Hiroshima & 12 & 11 & -- & 13 & 12 & 18 \\
4. Yomiuri   &  8 & 14 & 14 & -- & 14 & 13 \\
5. Hanshin   &  7 & 17 & 15 & 13 & -- & 10 \\
6. Chunichi  & 11 &  8 &  9 & 14 & 17 & -- \\
\thline
\end{tabular}
\end{center}

\smallskip
When we suppose that all games are independent trials,
we can consider $r_{ij}$ to be a random variable following
the binomial distribution $\mathrm{Bin}(n,q_{ij})$,
where $q_{ij}$ is the probability that Team $i$ beats Team $j$.
In order to analyze this data based on the model (\ref{MODEL}),
we use the variance stabilizing transformation (Rao (1973), 6g.3)\,: %
$$
 f(q)=2\sqrt{n}\bigl(\sin^{-1}\sqrt{q}-\pi/4\bigr).
$$
Then, $f(r_{ij}/n)$ approximately follows
the normal distribution $N(f(q_{ij}),1)$.

In league games, however, the games in each pair are sometimes
arranged within a short time interval.  Due to this game design,
some serial correlations may occur.  If the correlation is positive,
the over dispersion is expected to be observed.  In such a situation,
$f(r_{ij})$ can be modeled as $N(f(q_{ij}),\sigma^2)$
 with the variance $\sigma^2$ unknown.

The test statistics for testing the subtractivity and their $p$-values
are as follows:\ %
the chi-square statistic $\mathrm{tr}\bigl(\widehat\Gamma'\widehat\Gamma\bigr)/2=15.765$
 ($\mbox{d.f.}=10$, $p\mbox{-value}=0.1066$),
the largest singular value $\sigma_1\bigl(\widehat\Gamma\bigr)=3.932$
 ($p\mbox{-value}=0.0543$), and
the standardized largest singular value
$\sigma_1\bigl(\widehat\Gamma\bigr)/ \allowbreak
 \sqrt{\sum_i\sigma_i\bigl(\widehat\Gamma\bigr)^2}=0.990$
 ($p\mbox{-value}=0.0348$).
 (The other nonzero singular value is $\sigma_2\bigl(\widehat\Gamma\bigr)=0.553$.)
  In any cases, the hypothesis of subtractivity
is found to be suspicious, or is rejected.
The maximum contrast of three-way deadlock is
\begin{align}
\label{gamma256}
 \max_{i<j<k,\ i>j>k}
 \bigl(\widehat\gamma_{ij}+\widehat\gamma_{jk}+\widehat\gamma_{ki}\bigr)/\sqrt{3}
 = \bigl(\widehat\gamma_{65}+\widehat\gamma_{52}+\widehat\gamma_{26}\bigr)/\sqrt{3}=2.832.
\end{align}

Figure I is the residual plot, which was introduced
for the Bradley-Terry model by Takeuchi and Fujino (1988).
In this figure, $m$ points
$(\sqrt{\sigma_1}u_i,\sqrt{\sigma_1}v_i)$ $(1\leq i\leq m)$ are plotted,
where $\sigma_1$ is the largest singular value of  $\widehat\Gamma$
 (the residual matrix under the null hypothesis), and
$u=(u_1,\ldots,u_m)'$ and $v=(v_1,\ldots,v_m)'$ are the eigenvectors
that correspond to $\sigma_1$.
This plot is based on the two-rank approximation
$\widehat\Gamma\simeq\sigma_1 (u v'-v u')$.
This idea of plotting a skew-symmetric matrix
by two-rank approximation originates from Gower (1977).
It can be observed that the triplet teams 6, 5 and 2, which gives the maximum
contrast of three-way deadlock, form a large triangle around the origin
in the counterclockwise direction.

Note that if $\widehat\Gamma = \sigma_1 (u v'-v u')$ holds exactly,
the signed area $S_{ijk}$ of the triangle with the apexes
$(\sqrt{\sigma_1}u_l,\sqrt{\sigma_1}v_l)$ $(l=i,j,k)$ %
in the counterclockwise direction satisfies
$2 S_{ijk}/\sqrt{3}
=\bigl(\widehat\gamma_{ij}+\widehat\gamma_{jk}+\widehat\gamma_{ki}\bigr)/\sqrt{3}$.
In our example, $2 S_{652}/\sqrt{3}=2.839$, which is very close to
the maximum value 2.832 in (\ref{gamma256}).

\bigskip
\begin{center}
\scalebox{0.725}{\includegraphics{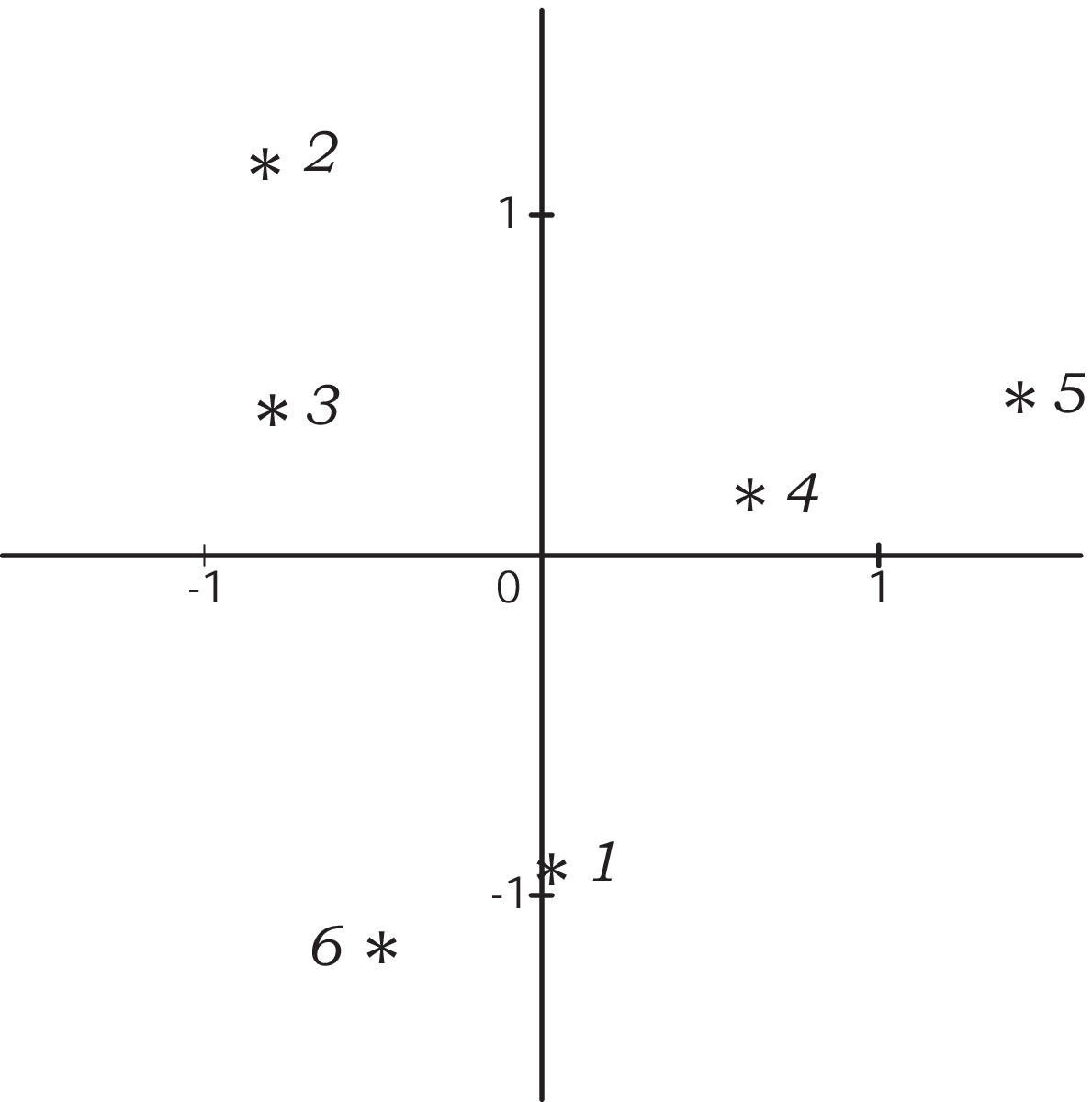}} %
\end{center}

\begin{center}
Figure I. Residual plot by two-rank approximation.
\end{center}

\vskip 4mm

\noindent APPENDIX

\vskip 1mm

\noindent A.1. The diffeomorphism between $M$ and $\widetilde G(2,p-2)$

Define the group action of $SO(p)$ on the space $Skew(p)$ by
$ X \mapsto H X H' \in Skew(p) $
for $X\in Skew(p)$, $H\in SO(p)$.
This action is obviously of $C^\infty$. %
Let
$$ X_0 = \begin{pmatrix} J & 0 \\
                         0 & 0 \end{pmatrix} \in Skew(p) ,\quad
     J = \begin{pmatrix} 0 & 1 \\
                        -1 & 0 \end{pmatrix}. $$
Then, the orbit that $X_0$ belongs to is given by
\begin{align*}
 SO(p)(X_0)
 &= \{ H X_0 H' \in Skew(p) \mid H\in SO(p) \}  \\
 &= \bigl\{ h_1 h_2' - h_2'h_1 \mid
  h_1'h_1 = h_2'h_2 = 1,\ h_1'h_2 = 0 \bigr\},
\end{align*}
which is the index manifold $M$ in (\ref{M}).
The isotropy subgroup of $X_0$ is
\begin{align*}
 SO(p)_{X_0}
 &= \{ H\in SO(p) \mid H X_0 H'=X_0 \}
  = \{ H\in SO(p) \mid H X_0 = X_0 H \}  \\
 &= \biggl\{ \begin{pmatrix} H_1 & 0 \\ 0 & H_2 \end{pmatrix} %
     \mid H_1\in SO(2),\ H_2\in SO(p-2) \biggr\} \\
 &= SO(2)\times SO(p-2),\ \mbox{say}. 
\end{align*}
Let $SO(p)/SO(p)_{X_0}=\{ H SO(p)_{X_0} \mid H\in SO(p) \}$
 be the quotient space endowed with the quotient topology. %
Define a map $f:SO(p)/SO(p)_{X_0}\to SO(p)(X_0)$ by
 $f(H SO(p)_{X_0})=H X_0 H'$.
Then, according to
 Theorem 4.3 and Corollary 4.4 %
of Kawakubo (1991), 
$f$ gives a $C^\infty$ embedding from $SO(p)/SO(p)_{X_0}$ to $SO(p)(X_0)$
(endowed with the relative topology). %
That is,
$SO(p)/SO(p)_{X_0} \cong SO(p)(X_0)=M$ $(C^\infty$-diffeomorphism).

$SO(p)/SO(p)_{X_0} = SO(p)/(SO(2)\times SO(p-2))$ is called
the oriented Grassmann manifold $\widetilde G(2,p-2)$.

\vskip 4mm

\noindent A.2. The critical radius of $M$

Let $t=(t^i)_{1\le i\le d}$, $d=2(p-2)$, denote the local coordinates
 of the submanifold $M$ in (\ref{M}) of $Skew(p)$
so that an element of $M$ is written as
$\phi = \phi(t) = h_1 h_2' - h_2 h_1' = H J H'$, where
$$
 H=H(t)= (h_1,h_2)\in V(2,p),
 \quad J = \begin{pmatrix} 0 & 1 \\ -1 & 0 \end{pmatrix}.
$$
The critical radius $\theta_c$ of $M$ is given by
$$
 \cot^2\theta_c = \sup_{\widetilde t \ne t}
 \frac{1-\bigl\langle \widetilde\phi, P_\phi\bigl(\widetilde\phi\bigr)\bigr\rangle}
      {(1-\bigl\langle \widetilde\phi,\phi\bigr\rangle)^2},
 \quad \widetilde\phi = \phi\bigl(\widetilde t\bigr),
$$
where $P_\phi : Skew(p)\to Skew(p)$ is the orthogonal projection
onto the subspace spanned by $\{\phi,\phi_1,\ldots,\phi_d\}$,
 $\phi_i = \partial \phi/\partial t^i$
 (Lemma 3.1 of Kuriki and Takemura (2001),
 Lemma 2.1 of Takemura and Kuriki (2002)).
Recall that $Skew(p)$ is a linear space of $p\times p$ real skew-symmetric
matrices endowed with the metric $\langle A,B\rangle=\mathrm{tr}(A'B)/2$.

Let $G=(g_{ij})=(\langle \phi_i,\phi_j\rangle)$, and
$G^{-1}=(g^{ij})$.
Noting that $\langle \phi,\phi\rangle = \mathrm{tr}(\phi'\phi)/2=1$
and hence
$\langle \phi,\phi_i\rangle = \mathrm{tr}(\phi'\phi_i)/2=0$,
we see that
$$
 \bigl\langle \widetilde\phi, P_\phi\bigl(\widetilde\phi\bigr)\bigr\rangle
 = \frac{1}{4} \bigl\{\mathrm{tr}\bigl(\widetilde\phi'\phi\bigr)\bigr\}^2
 + \frac{1}{4} \sum_{i,j=1}^d
     \mathrm{tr}\bigl(\widetilde\phi'\phi_i\bigr) \mathrm{tr}\bigl(\widetilde\phi'\phi_j\bigr) g^{ij}.
$$
Moreover,
$$
\bigl\{\mathrm{tr}\bigl(\widetilde\phi'\phi\bigr)\bigr\}^2
 = \bigl\{\mathrm{tr}\bigl(\widetilde H J'\widetilde H' H J H'\bigr)\bigr\}^2
 = \bigl\{\mathrm{tr}(R J R' J)\bigr\}^2
$$
with $\widetilde H=H\bigl(\widetilde t\bigr)$, $R = \widetilde H'H$,
and
$$
 \mathrm{tr}\bigl(\widetilde\phi'\phi_i\bigr)
 = \mathrm{tr}\bigl(\widetilde H J'\widetilde H' (H_i J H'+H J H_i')\bigr)
 = -2 \mathrm{tr}\bigl(J R' J \widetilde H'H_i\bigr)
$$
with $H_i=\partial H/\partial t^i$.
Note that $R = \widetilde H'H$ is a $2\times 2$ real matrix
such that the absolute values of the eigenvalues are less than or
equal to $1$, and $\widetilde\phi = \phi$ if and only if $R \in SO(2)$.
Since $H'H_i$ is skew-symmetric, we can put $H_i=b_i H J + \overline H C_i$ with
$b_i$ a scalar, $\overline H$ a $p\times (p-2)$ matrix
 such that $(H,\overline H)$ is $p\times p$ orthogonal,
and $C_i$ a $p\times 2$ matrix.  Therefore,
$$
 \mathrm{tr}\bigl(\widetilde\phi'\phi_i\bigr)
 = -2 \mathrm{tr}\bigl\{J R' J \widetilde H'(b_i H J + \overline H C_i)\bigr\}
 = -2 \mathrm{tr}\bigl(J R' J \widetilde H' \overline H C_i\bigr).
$$

On the other hand, as
\begin{align*}
g_{ij}
&= \langle \phi_i,\phi_j \rangle = \mathrm{tr}(\phi_i'\phi_j)/2 \\
&= \mathrm{tr}((H_i J' H'+H J' H_i')(H_j J H'+H J H_j'))/2 \\
&= \mathrm{tr}(H_i' H_j) - \mathrm{tr}(H'H_i J H'H_j J) 
= \{\mathrm{tr}(C_i'C_j) + 2 b_i b_j \} - 2 b_i b_j \\
&= \mathrm{tr}(C_i'C_j),
\end{align*}
we have
\begin{align*}
\sum_{i,j=1}^d & \mathrm{tr}\bigl(\widetilde\phi'\phi_i\bigr)
  \mathrm{tr}\bigl(\widetilde\phi'\phi_j\bigr) g^{ij}
= 4 \mathrm{tr}\bigl\{\bigl(J R' J \widetilde H' \overline H\bigr)
 \bigl(J R' J \widetilde H' \overline H\bigr)'\bigr\} \\
&= 4 \mathrm{tr}\bigl\{R' J \widetilde H' (I - H H') \widetilde H J' R\bigr\}
= 4 \mathrm{tr}(R R') + 4 \mathrm{tr}(R R' J R R' J).
\end{align*}

After summarizing the above, and conducting some further calculations, we obtain
\begin{align*}
 \cot^2\theta_c
&= \sup_{R \notin SO(2)}
 \frac{1 -\{\mathrm{tr}(RJR'J)\}^2/4 -\mathrm{tr}(R R') -\mathrm{tr}(R R' J R R' J)}
      {(1 + \mathrm{tr}(R J R' J)/2)^2} \\
&= \sup_{R\notin SO(2)} \Bigl\{
 1 - \frac{(r_{11}-r_{22})^2 + (r_{12}+r_{21})^2}
          {(1-r_{11}r_{22}+r_{12}r_{21})^2} \Bigr\},
\end{align*}
where $R=(r_{ij})_{1\le i,j\le 2}$. 

From the expression above, it holds $\cot^2\theta_c \le 1$ obviously.
On the other hand, when $p\ge 4$, $R=\widetilde H' H$ can be a zero matrix,
from which $\cot^2\theta_c=1$ or $\theta_c=\pi/4$ follows.

\vskip 4mm

\noindent A.3. A sketch of the proof of Lemma \ref{lm:hankel}

For $\delta>-1$, let $G=\bigl(\Gamma(\delta+2t-i-j+1)\bigr)_{1\le i,j\le t}$.
For $1\le k\le t-1$, let $B_k=(b_{k,ij})_{1\le i,j\le t}$ be
an upper band matrix with the $(i,j)$th element
$$ b_{k,ij} = \begin{cases}
 1             & \mbox{if $i=j$}, \\
 -(\delta+t-i) & \mbox{if $i+1=j$ and $i\le t-k$}, \\
 0             & \mbox{otherwise}.
 \end{cases}
$$
It can be confirmed that
$ B_{t-1}\cdots B_2 B_1 G = E T D$, where
$T=(t_{ij})$,
$$
 t_{ij} = \begin{cases} \displaystyle
 {t-j \choose t-i} & \mbox{if $i\ge j$}, \\
 0 & \mbox{if $i<j$}, \end{cases}
$$
is a lower triangular matrix, and
$$
 D = \mathrm{diag}\bigl( \Gamma(\delta+t-i+1) \bigr)_{1\le i\le t}, \quad
 E = \mathrm{diag}\bigl( (t-i)! \bigr)_{1\le i\le t}.
$$
The product of $B_k$ becomes an upper triangular matrix
$B_{t-1}\cdots B_2 B_1=B=(b_{ij})$ with
$$
 b_{ij} = \begin{cases} \displaystyle
 (-1)^{i+j}{t-i \choose t-j}
 \frac{\Gamma(\delta+t-i+1)}{\Gamma(\delta+t-j+1)} & \mbox{if $i\le j$}, \\
 0 & \mbox{if $i>j$}. \end{cases}
$$
The inverse matrix of $T$ becomes $T^{-1}=(t^{ij})$
 with $t^{ij}=(-1)^{i+j} t_{ij}$.

Then, the inverse matrix of $G$ is calculated as
 $G^{-1}=D^{-1} T^{-1} E^{-1} B$.
Setting $\delta=\epsilon-1/2$ yields (\ref{hankel}).

Further, note that
\begin{align}
\label{det}
 \det(G)=\det(D)\det(E)= \prod_{i=1}^t\Gamma(\delta+t-i+1)\,(t-i)!.
\end{align}
This identity (\ref{det}) provides another proof that
$d_p$ in (\ref{largest-sv}), Theorem \ref{thm:largest-sv}, 
is actually the normalizing constant.

\vskip 3mm

\noindent ACKNOWLEDGEMENT

The author is very grateful to Tomoko Nakao for her
 help in analyzing the baseball data.

\vskip 3mm

\noindent BIBLIOGRAPHY

\vskip 3mm

\noindent Adler, R.\,J.\ and Taylor, J.\,E.\ (2007).
{\it Random Fields and their Geometry\/}. New York: Springer.

\vskip 2mm

\noindent Gower, J.\,C.\ (1977).
The analysis of asymmetry and orthogonality.
In {\it Recent Developments in Statistics\/} %
 (J.\,R.\ Barra, F.\ Brodeau, G.\ Romier and B.\ Van Cutsem eds.), 109--123, Amsterdam: North-Holland.

\vskip 2mm

\noindent Hirotsu, C.\ (1983).
Defining the pattern of association in two-way contingency tables.
{\it Biometrika\/}, {\bf 70}, 579--589.

\vskip 2mm

\noindent James, A.\,T.\ (1954).
Normal multivariate analysis and the orthogonal group.
{\it Ann.\ Math.\ Statist.\/}, {\bf 25}, 40--75.

\vskip 2mm

\noindent James, A.\,T.\ (1964).
Distributions of matrix variates and latent roots derived from normal samples.
{\it Ann.\ Math.\ Statist.\/}, {\bf 35}, 475--501.

\vskip 2mm

\noindent Johnson, D.\,E.\ and Graybill, F.\,A.\ (1972).
An analysis of a two-way model with interaction and no replication.
{\it J.\ Amer.\ Statist.\ Assoc.\/}, {\bf 67}, 862--868.

\vskip 2mm

\noindent Karlin, S.\ and Rinott, Y.\ (1988).
A generalized Cauchy-Binet formula and applications to total positivity
 and majorization.
{\it J.\ Multivariate Anal.\/}, {\bf 27}, 284--299.

\vskip 2mm

\noindent Kawakubo, K.\ (1991).
{\it The Theory of Transformation Groups\/}. Oxford University Press: Oxford.

\vskip 2mm

\noindent Khatri, C.\,G.\ (1964).
Distribution of the largest or the smallest characteristic root under null
hypothesis concerning complex multivariate normal populations.
{\it Ann.\ Math.\ Statist.\/}, {\bf 35}, 1807--1810.

\vskip 2mm

\noindent Khatri, C.\,G.\ (1965).
A test for reality of a covariance matrix in a certain complex Gaussian
distribution. {\it Ann.\ Math.\ Statist.\/}, {\bf 36}, 115--119.

\vskip 2mm

\noindent Krishnaiah, P.\,R.\ (1976).
Some recent developments on complex multivariate distributions.
{\it J.\ Multivariate Anal.\/}, {\bf 6}, 1--30.

\vskip 2mm

\noindent Kuriki, S.\ and Takemura, A.\ (2001).
Tail probabilities of the maxima of multilinear forms and their
 applications. {\it Ann.\ Statist.\/}, {\bf 29}, 328--371.

\vskip 2mm

\noindent Rao, C.\,R.\ (1973).
{\it Linear Statistical Inference and its Applications\/}, 2nd ed.
New York: Wiley.

\vskip 2mm

\noindent Scheff\'{e}, H.\ (1952).
An analysis of variance for paired comparisons.
{\it J.\ Amer.\ Statist.\ Assoc.\/}, {\bf 47}, 381--400.

\vskip 2mm

\noindent Takemura, A.\ and Kuriki, S.\ (2002).
Maximum of Gaussian field on piecewise smooth domain:
 Equivalence of tube method and Euler characteristic method. %
{\it Ann.\ Appl.\ Probab.\/}, {\bf 12}, 768--796.

\vskip 2mm

\noindent Takeuchi, K.\ and Fujino, Y.\ (1988).
{\it Mathematical Sciences of Sports\/} (in Japanese).
Tokyo: Kyoritsu. %

\end{document}